\numberwithin{equation}{section}
\newtheorem{thm}{Theorem}[section]
\newtheorem{lem}[thm]{Lemma}
\newtheorem{prop}[thm]{Proposition}
\newtheorem{cor}[thm]{Corollary}
\theoremstyle{definition}
\newtheorem{defn}{Definition}[section]
\newcommand{\s}{\mathfrak{S}}
\newcommand{\W}{\mathcal{W}}
\newcommand{\ee}{\end{equation}}
\newcommand{\be}[1]{\begin{equation}\label{#1}}
\newcommand{\norm}[1]{\left\Vert#1\right\Vert ^2}
\newcommand{\nJ}{\norm{\nabla J}}
\newcommand{\n}{\nabla}
\newcommand{\nn}{\nabla'}
\newcommand{\ie}{i.~e. }
\newcommand{\thmref}[1]{Theorem~\ref{#1}}
\newcommand{\propref}[1]{Proposition~\ref{#1}}
\newcommand{\lemref}[1]{Lemma~\ref{#1}}
\begin{document}

\centerline{\large \textbf{CANONICAL CONNECTION ON
QUASI-K\"AHLER}}

\smallskip

\centerline{\large \textbf{MANIFOLDS WITH NORDEN METRIC}}

 \vskip 5mm

\centerline{DIMITAR MEKEROV} \vskip 5mm

\begin{abstract}
We study the geometry of the canonical connection on a quasi-K\"ahler
manifold with Norden metric. We consider the cases when the canonical
connection has K\"ahler curvature tensor and parallel torsion, and derive
conditions for an isotropic-K\"ahler manifold.
We give the relation
between the canonical connection, the $B$-connection, and the connection with totally skew symmetric
torsion on quasi-K\"ahler manifolds with Norden
metric.
\par
{\footnotesize {\it Mathematics Subject Classification (2000).}
Primary 53C15, 53C50; Secondary 32Q60, 53B05. \par {\it Key
words.} Norden metric, almost complex manifold, quasi-K\"ahler
manifold, indefinite metric, canonical connection,
parallel torsion.}
\end{abstract}



\section*{Introduction}

A fundamental fact about almost complex manifolds with Hermitian
metric is that the action of the almost complex structure on the
tangent space is isometry. There is another kind of metric, called
a Noden metric, or a $B$-metric, on an almost complex manifold,
such that the action of the almost complex strucutre is
anti-isometry with respect to the metric. Such a manifold in
\cite{GrMeDj} is called a generalized $B$-manifold, while in the
present paper we adopt the more widely used term of almost complex
manifold with Norden metric \cite{GaBo}.

It is known \cite{Lih-1,Lih-2,GrayBNV} that on an almost Hermitian
manifold there exists a unique linear connection $\nn$ with a
torsion $T$ such that $\nn J=\nn g=0$ and $T(x,Jy)=T(Jx,y)$, for
all vector fields $x,y$ on $M$. The group of the conformal
transformations on the metric $g$ generates the conformal group of
transformations of $\nn$. An analogous problem for almost complex
manifolds with Norden metric is treated in \cite{GaMi}, where a
canonical connection on such a manifold is obtained and proved
unique.

In the present paper we consider this canonical connection on an
almost complex manifold with Norden metric in the case when it is
a quasi-K\"ahler manifold.
We establish that the manifold is isotropic-K\"ahler
manifold with Norden metric iff the scalar curvatures of the
canonical connection and the Levi-Civita connection are equal. We
consider the cases of the canonical connection with K\"ahler
curvature tensor and parallel torsion. We give the relation
between the canonical connection, the $B$-connection
(\cite{Mek-1}), and the connection with totally skew symmetric
torsion (\cite{Mek-2}) on quasi-K\"ahler manifolds with Norden
metric.
The $B$-connection is an analogue of the first canonical
Lihnerowicz connection in the Hermitian geometry
\cite{Lih-3,Ya,Gaud}.
The connection with a totally skew-symmetric torsion is
known as a Bismut connection or a $KT$-connection. It is applied
in mathematics as well as in theoretical physics. For instance,
the local index theorem for non-K\"ahler Hermitian manifolds is
proved in \cite{Bis} using this connection. Moreover, this
connection is applied in string theory \cite{Stro}.


\section{Preliminaries}

Let $(M,J,g)$ be a $2n$-dimensional \emph{almost complex manifold
with Norden metric}, i.~e.
\begin{equation}\label{2.1}
J^2x=-x, \qquad g(Jx,Jy)=-g(x,y),
\end{equation}
for all differentiable vector fields $x$, $y$ on $M$. The
\emph{associated metric} $\tilde{g}$ of $g$ on $M$, given by
$\tilde{g}(x,y)=g(x,Jy)$, is a Norden metric, too. The signature
of both metrics is necessarily $(n,n)$.

Further, $x$, $y$, $z$, $w$ will stand for arbitrary
differentiable vector fields on $M$ (or vectors in the tangent
space  of $M$ at an arbitrary point $p\in M$).

The Levi-Civita connection of $g$ is denoted by $\nabla$. The
tensor filed $F$ of type $(0,3)$ on $M$ is defined by
\begin{equation}\label{2.2}
F(x,y,z)=g\bigl( \left( \nabla_x J \right)y,z\bigr).
\end{equation}
It has the following properties \cite{GrMeDj}:
\begin{equation}\label{2.3}
F(x,y,z)=F(x,z,y)=F(x,Jy,Jz),\quad F(x,Jy,z)=-F(x,y,Jz).
\end{equation}

In \cite{GaBo}, the considered manifolds are classified into eight classes with respect to $F$.
The class $\W_0$ of the
\emph{K\"ahler manifolds with Norden metric} belongs to any
of the other seven classes. It is determined by the condition $F(x,y,z)=0$, which is
equivalent to $\n J=0$.

The condition                                                                        
\begin{equation}\label{2.4}
\mathop{\s} \limits_{x,y,z} F(x,y,z)=0,
\end{equation}
where $\mathop{\s} \limits_{x,y,z}$ is the cyclic sum over $x,y,z$,
characterizes the class $\W_3$ of the \emph{quasi-K\"ahler
manifolds with Norden metric}. This is the only class of manifolds
with non-integrable almost complex structure $J$, \ie manifolds
with non-zero Nijenhuis tensor $N$, where
\begin{equation}\label{2.5}
    N(x,y)=\left(\n_x J\right)Jy-\left(\n_y J\right)Jx+\left(\n_{Jx} J\right)y-\left(\n_{Jy}
    J\right)x.
\end{equation}

The associated tensor $N^*$ of $N$ is defined by
\begin{equation}\label{2.6}
    N^*(x,y)=\left(\n_x J\right)Jy+\left(\n_y J\right)Jx+\left(\n_{Jx} J\right)y+\left(\n_{Jy}
    J\right)x.
\end{equation}

The condition \eqref{2.4} is equivalent (\cite{GrMeDj}) to the
condition
\begin{equation}\label{2.7}
    N^*(x,y)=0
\end{equation}
and (\cite{MeMa}) to the condition
\begin{equation}\label{2.8}
\mathop{\s} \limits_{x,y,z} F(Jx,y,z)=0.
\end{equation}

Let $\{e_i\}$ ($i=1,2,\dots,2n$) be an arbitrary basis of the
tangent space of $M$ at a point $p\in M$. The components of the
inverse matrix of $g$, with respect to this basis, are denoted by $g^{ij}$.

Following \cite{GRMa}, the \emph{square norm} $\nJ$ of $\nabla J$
is defined in \cite{MeMa} by
\begin{equation}\label{2.9}
    \nJ=g^{ij}g^{ks}
    g\bigl(\left(\nabla_{e_i} J\right)e_k,\left(\nabla_{e_j}
    J\right)e_s\bigr).
\end{equation}
There, the manifold with $\nJ=0$ is called an
\emph{isotropic-K\"ahler manifold with Norden metric}. It is clear
that every K\"ahler manifold with Norden metric is
isotropic-K\"ahler, but the inverse implication is not always
true.

For quasi-K\"ahler manifolds with Norden metric the following
equality is valid \cite{MeMa}
\begin{equation}\label{2.10}
    \nJ=-2g^{ij}g^{ks}
    g\bigl(\left(\nabla_{e_i} J\right)e_k,\left(\nabla_{e_s}
    J\right)e_j\bigr).
\end{equation}

Let $R$ be the curvature tensor of $\nabla$, \ie $
R(x,y)z=\nabla_x \nabla_y z - \nabla_y \nabla_x z -
    \nabla_{[x,y]}z$ and the corresponding $(0,4)$-tensor is
determined by $R(x,y,z,w)=g(R(x,y)z,w)$. The Ricci tensor $\rho$
and the scalar curvature $\tau$ with respect to $\nabla$ are
defined by
\[
    \rho(y,z)=g^{ij}R(e_i,y,z,e_j),\qquad \tau=g^{ij}\rho(e_i,e_j).
\]

A tensor $L$ of type (0,4) with the pro\-per\-ties%
\be{2.11}%
L(x,y,z,w)=-L(y,x,z,w)=-L(x,y,w,z), \ee \be{2.12} \mathop{\s}
\limits_{x,y,z} L(x,y,z,w)=0 \quad \textit{(the first Bianchi
identity)}
\ee %
is called a \emph{curvature-like tensor}. Moreover, if the
curvature-like tensor $L$ has the property
\begin{equation}\label{2.13}
L(x,y,Jz,Jw)=-L(x,y,z,w),
\end{equation}
we call it a \emph{K\"ahler tensor} \cite{GaGrMi}.

\section{A natural connection on an almost complex manifold with Norden metric}

Let $\nn$ be a linear connection with a tensor $Q$ of the
transformation $\n \rightarrow\nn$ and a torsion tensor $T$, \ie
\[
\nn_x y=\n_x y+Q(x,y),\quad T(x,y)=\nn_x y-\nn_y x-[x,y].
\]
The corresponding (0,3)-tensors are defined by
\begin{equation}\label{3.1}
    Q(x,y,z)=g(Q(x,y),z), \quad T(x,y,z)=g(T(x,y),z).
\end{equation}
The symmetry of the Levi-Civita connection implies
\begin{equation}\label{3.2}
    T(x,y)=Q(x,y)-Q(y,x),
\end{equation}
\begin{equation}\label{3.3}
    T(x,y)=-T(y,x).
\end{equation}

A partial decomposition of the space $\mathcal{T}$
of the torsion (0,3)-tensors $T$(\ie $T(x,y,z)=-T(y,x,z)$) is
valid on an almost complex manifold with Norden metric $(M,J,g)$:
$\mathcal{T}=\mathcal{T}_1\oplus\mathcal{T}_2\oplus\mathcal{T}_3\oplus\mathcal{T}_4$,
where $\mathcal{T}_i$ $(i=1,2,3,4)$ are invariant orthogonal
subspaces \cite{GaMi}. For the projection operators $p_i$ of $\mathcal{T}$ in
$\mathcal{T}_i$ is established:
\begin{equation}\label{3.4}
  \begin{array}{l}
    4p_1(x,y,z)=T(x,y,z)-T(Jx,Jy,z)-T(Jx,y,Jz)-T(x,Jy,Jz),\\[4pt]
    4p_2(x,y,z)=T(x,y,z)-T(Jx,Jy,z)+T(Jx,y,Jz)+T(x,Jy,Jz),\\[4pt]
    8p_3(x,y,z)=2T(x,y,z)-T(y,z,x)-T(z,x,y)-T(Jy,z,Jx)\\[4pt]
    \phantom{p_3(x,y,z)=-}-T(z,Jx,Jy)+2T(Jx,Jy,z)-T(Jy,Jz,x)\\[4pt]
    \phantom{p_3(x,y,z)=-}-T(Jz,Jx,y)+T(y,Jz,Jx)+T(Jz,x,Jy),\\[4pt]
    8p_4(x,y,z)=2T(x,y,z)+T(y,z,x)+T(z,x,y)+T(Jy,z,Jx)\\[4pt]
    \phantom{p_3(x,y,z)=-}+T(z,Jx,Jy)+2T(Jx,Jy,z)+T(Jy,Jz,x)\\[4pt]
    \phantom{p_3(x,y,z)=-}+T(Jz,Jx,y)-T(y,Jz,Jx)-T(Jz,x,Jy).\\[4pt]
  \end{array}
\end{equation}
A linear connection $\nn$ on an almost complex manifold with
Norden metric $(M,J,g)$ is called a \emph{natural connection} if
$\nn J=\nn g=0$. The last conditions are equivalent to $\nn g=\nn
\tilde{g}=0$. If $\nn$ is a linear connection with a tensor $Q$ of
the transformation $\n \rightarrow\nn$ on an almost complex
manifold with Norden metric, then it is  a natural connection iff
the following conditions are valid:
\begin{equation}\label{3.5}
    F(x,y,z)=Q(x,y,Jz)-Q(x,Jy,z),
\end{equation}
\begin{equation}\label{3.6}
    Q(x,y,z)=-Q(x,z,y).
\end{equation}

Let $\Phi$ be the (0,3)-tensor determined by
\begin{equation}\label{3.7}
    \Phi(x,y,z)=g\left(\widetilde{\nabla}_x y - \n_x y,z \right),
\end{equation}
where $\widetilde{\nabla}$ is the Levi-Civita connection of the
associated metric $\tilde{g}$.

\begin{thm}[\cite{GaMi}]\label{t-3.1}
A linear connection with the torsion tensor $T$ on an almost
complex manifold with Norden metric $(M,J,g)$ is natural iff
\begin{equation}\label{3.8}
    4p_2(x,y,z)=g(N(x,y),z)=2\left\{\Phi(z,Jx,Jy)-\Phi(z,x,y)\right\},\\[4pt]
\end{equation}
\begin{equation}\label{3.9}
  \begin{array}{l}
    4p_3(x,y,z)=-\Phi(x,y,z)+\Phi(y,z,x)+\Phi(x,Jy,Jz)\\[4pt]
    \phantom{4p_3(x,y,z)=}+\Phi(y,Jz,Jx)-2\Phi(z,Jx,Jy).
  \end{array}
\end{equation}
\end{thm}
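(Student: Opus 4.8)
The plan is to reduce the statement, via the material just recalled, to a single algebraic identity that serves both implications at once. The text preceding the theorem shows that $\nn$, written as $\nn_x y=\n_x y+Q(x,y)$, is natural if and only if its potential $Q$ satisfies \eqref{3.5} and \eqref{3.6}, while \eqref{3.2} gives $T(x,y,z)=Q(x,y,z)-Q(y,x,z)$. Since \eqref{3.6} makes $Q$ skew-symmetric in its last two arguments, this relation is invertible: $Q(x,y,z)=\tfrac12\{T(x,y,z)-T(y,z,x)+T(z,x,y)\}$, the reconstructed $Q$ again obeys \eqref{3.6}, and $Q\mapsto T$ is a bijection from the $(0,3)$-tensors skew in the last two arguments onto the torsion tensors (skew in the first two). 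Hence it suffices to prove that, under this correspondence, the one remaining condition \eqref{3.5} on $Q$ is equivalent to the conjunction of \eqref{3.8} and \eqref{3.9} on $T$; the inclusion ``$\{\eqref{3.5}\}\subseteq\{\eqref{3.8}\wedge\eqref{3.9}\}$'' is the forward implication and the reverse inclusion is its converse.

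First I would set up the $F$--$\Phi$ dictionary. Writing the Koszul formula for $\widetilde{\nabla}$ (the Levi-Civita connection of $\tilde g(x,y)=g(x,Jy)$) and subtracting that for $\n$, all terms involving covariant derivatives of vector fields cancel, leaving $\Phi$ expressed linearly through values of $F$ (conventions \eqref{2.2}); in particular $\Phi(x,y,Jz)+\Phi(x,z,Jy)=F(x,y,z)$, and since $F$ is recovered from $\Phi$ this is a linear isomorphism compatible with the symmetries \eqref{2.3}. I would also record that \eqref{2.5} and \eqref{2.3} give $g(N(x,y),z)=F(x,Jy,z)-F(y,Jx,z)+F(Jx,y,z)-F(Jy,x,z)$, which by the dictionary equals $2\{\Phi(z,Jx,Jy)-\Phi(z,x,y)\}$; thus the middle equality in \eqref{3.8} is a manifold identity, independent of the connection, and only its first equality carries connection content.

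Then comes the core computation: substitute $T(x,y,z)=Q(x,y,z)-Q(y,x,z)$ into the right-hand sides of the $p_2$- and $p_3$-formulas of \eqref{3.4}, normalize with \eqref{3.6} to move the inserted $J$'s across the skew-symmetry of $Q$ in its last two slots, and then trade $Q$ for $F$ using \eqref{3.5}, most conveniently in the equivalent form $Q(x,y,z)+Q(x,Jy,Jz)=F(x,Jy,z)$ obtained by replacing $y$ with $Jy$ in \eqref{3.5}. One finds that $4p_2(x,y,z)$ collapses to $g(N(x,y),z)$, and that $4p_3(x,y,z)$ collapses to $-\Phi(x,y,z)+\Phi(y,z,x)+\Phi(x,Jy,Jz)+\Phi(y,Jz,Jx)-2\Phi(z,Jx,Jy)$ after the final passage from $F$ to $\Phi$; these are \eqref{3.8} and \eqref{3.9}. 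For the reverse implication one checks, from the defining symmetries of the subspaces $\mathcal{T}_1$ and $\mathcal{T}_4$ in \eqref{3.4}, that the undetermined summands $p_1T$ and $p_4T$ contribute nothing to $Q(x,y,Jz)-Q(x,Jy,z)$, so that prescribing $p_2T$ and $p_3T$ via \eqref{3.8}--\eqref{3.9} forces exactly \eqref{3.5}; thus every $T$ with those two properties is the torsion of a (unique) natural connection.

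I expect the main obstacle to be the bookkeeping in the core computation: the $p_3$-formula in \eqref{3.4} is long (ten torsion terms, doubling after the substitution for $T$), so one must be systematic---grouping by the combined positions of the inserted $J$'s, applying \eqref{3.6} before \eqref{3.5}, and keeping \eqref{2.3} at hand---to make the cancellation of the free part and the emergence of $N$ and $\Phi$ transparent. The one other delicate point is fixing the signs in the Koszul comparison so that the $F$--$\Phi$ dictionary is consistent with \eqref{2.2}--\eqref{2.3}; once that is pinned down, the remainder is routine manipulation.
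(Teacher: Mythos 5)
The paper does not prove this theorem: it is quoted from \cite{GaMi} and used as a black box, so there is no in-paper argument to measure your proposal against. Judged on its own, your strategy is the right one and the pieces that can be checked quickly do work. The reduction of naturality to \eqref{3.5}--\eqref{3.6} on $Q$, the Hayden inversion $Q(x,y,z)=\tfrac12\{T(x,y,z)-T(y,z,x)+T(z,x,y)\}$ identifying metric potentials with torsions, and the two auxiliary identities you quote are all correct: \eqref{3.10} does give $\Phi(x,y,Jz)+\Phi(x,z,Jy)=F(x,y,z)$, and $g(N(x,y),z)=F(x,Jy,z)-F(y,Jx,z)+F(Jx,y,z)-F(Jy,x,z)=2\{\Phi(z,Jx,Jy)-\Phi(z,x,y)\}$ is indeed a connection-independent identity. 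The $p_2$ computation also closes cleanly: writing \eqref{3.5} in the form $F(x,Jy,z)=Q(x,y,z)+Q(x,Jy,Jz)$ and grouping the eight terms of $T(x,y,z)-T(Jx,Jy,z)+T(Jx,y,Jz)+T(x,Jy,Jz)$ in pairs yields exactly $g(N(x,y),z)$. The $p_3$ identity is a longer instance of the same bookkeeping, which you acknowledge but do not carry out; that is acceptable as a plan.

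The one genuine gap is in your converse. Knowing that $p_1T$ and $p_4T$ contribute nothing to $Q(x,y,Jz)-Q(x,Jy,z)$ does not by itself show that prescribing $p_2T$ and $p_3T$ by \eqref{3.8}--\eqref{3.9} ``forces exactly \eqref{3.5}'': you must still evaluate the linear map $T\mapsto Q(x,y,Jz)-Q(x,Jy,z)$ on the prescribed $\mathcal{T}_2\oplus\mathcal{T}_3$ part and verify that its value is $F(x,y,z)$ rather than some other tensor. The cheapest repair is to exhibit one natural connection explicitly --- for instance $Q^B(x,y)=\tfrac12\left(\n_xJ\right)Jy$, \ie $Q^B(x,y,z)=\tfrac12F(x,Jy,z)$, which satisfies \eqref{3.6} because $F(x,Jy,z)=-F(x,Jz,y)$ and satisfies \eqref{3.5} because $F(x,Jy,Jz)=F(x,y,z)$ --- whose torsion $T_0$ realizes the prescribed projections by your forward computation; for any other $T$ with the same $p_2,p_3$ one has $T-T_0\in\mathcal{T}_1\oplus\mathcal{T}_4$, and your annihilation claim then finishes the argument. (Alternatively, compute the map directly on the right-hand sides of \eqref{3.8}--\eqref{3.9}.) Note also that the annihilation of $\mathcal{T}_1\oplus\mathcal{T}_4$ is itself only asserted; it is a finite check against \eqref{3.4} and, together with the $p_3$ bookkeeping, is the only place an error could hide, so both should be written out in a complete proof.
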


For an arbitrary almost complex manifold with Norden metric
$(M,J,g)$ the following equality is valid \cite{GaGrMi}
\begin{equation}\label{3.10}
    \Phi(x,y,z)=\frac{1}{2}\left\{F(Jz,x,y)-F(x,y,Jz)-F(y,Jz,x)\right\}.
\end{equation}

Applying \eqref{2.4} to \eqref{3.10} we obtain that $(M,J,g)$ is a
quasi-K\"ahler manifold with Norden metric iff
\begin{equation}\label{3.11}
    \Phi(x,y,z)=F(Jz,x,y).
\end{equation}

\begin{thm}\label{thm-3.3}
    For a natural connection with a torsion tensor $T$ on a quasi-K\"ahler manifold with Norden metric
    $(M,J,g)$, which is non-K\"ahlerian, the following properties
    are valid
    \begin{equation}\label{3.12}
        p_2\neq 0,\qquad p_3=0.
    \end{equation}
\end{thm}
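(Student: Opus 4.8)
The plan is to read off the two assertions directly from the characterization of natural connections in \thmref{t-3.1}, using the quasi-K\"ahler identity \eqref{3.11} to rewrite the tensor $\Phi$ entirely in terms of $F$. Since on a quasi-K\"ahler manifold with Norden metric we have $\Phi(x,y,z)=F(Jz,x,y)$, everything on the right-hand sides of \eqref{3.8} and \eqref{3.9} becomes an explicit linear combination of values of $F$, to which we may then apply the symmetries \eqref{2.3} and the defining cyclic relation \eqref{2.4} (equivalently \eqref{2.8}).

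First I would establish $p_2\neq 0$. From \eqref{3.8}, $4p_2(x,y,z)=g(N(x,y),z)$, so $p_2=0$ would force $N=0$; but \eqref{2.5}--\eqref{2.7} together with the hypothesis that the manifold is non-K\"ahlerian (so $F\not\equiv 0$) mean precisely that $N\neq 0$ on a quasi-K\"ahler manifold — this is the content of the sentence following \eqref{2.5} in the Preliminaries, that $\W_3$ is the only class with non-integrable $J$. Hence $p_2\neq0$. Alternatively, one can substitute \eqref{3.11} into the middle expression of \eqref{3.8} and check $2\{\Phi(z,Jx,Jy)-\Phi(z,x,y)\}=2\{F(Jz,z,Jx\dots)\}$ is not identically zero when $F\not\equiv0$, but invoking non-integrability is cleaner.

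The substantive part is $p_3=0$. Here I would plug \eqref{3.11} into the right-hand side of \eqref{3.9}:
\[
4p_3(x,y,z)=-F(Jz,x,y)+F(Jx,y,z)+F(Jz,x,Jy)+F(Jx,Jz,y)-2F(Jy,z,Jx).
\]
Now apply the $F$-symmetries \eqref{2.3}: the terms containing two $J$'s inside collapse (e.g. $F(Jz,x,Jy)=-F(Jz,Jx,y)$ and $F(Jx,Jz,y)=-F(Jx,z,Jy)=F(Jx,Jy,z)$ after using $F(x,Jy,z)=-F(x,y,Jz)$ and $F(x,y,z)=F(x,Jy,Jz)$ appropriately), while $-2F(Jy,z,Jx)=2F(Jy,Jz,x)$. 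After these reductions the bracket should become, up to sign and the ordering of the last two slots, the cyclic sum $\s_{x,y,z}F(Jx,y,z)$ in the first slot, which vanishes by \eqref{2.8}. So the whole expression is zero and $p_3=0$.

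The main obstacle is purely bookkeeping: the right-hand side of \eqref{3.9} is a five-term combination with $J$'s scattered across all three slots, and one must repeatedly and carefully apply \eqref{2.3} (both $F(x,y,z)=F(x,Jy,Jz)$ and $F(x,Jy,z)=-F(x,y,Jz)$) to bring every term into a common form $F(J\cdot,\cdot,\cdot)$ before the cyclic cancellation from \eqref{2.8} (or \eqref{2.4}) applies. There is no conceptual difficulty, but sign errors are easy; I would organize the computation by first normalizing each term so that exactly one $J$ sits in the first argument, then grouping into a cyclic sum. A useful consistency check is that the identity must hold identically in $x,y,z$, so one can test it on symmetric/antisymmetric components of $F$ separately.
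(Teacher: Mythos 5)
Your overall strategy is exactly the paper's: for $p_2\neq 0$ argue that $p_2=0$ would give $N=0$ via \eqref{3.8}, which together with \eqref{2.7} forces the manifold to be K\"ahlerian; for $p_3=0$ substitute \eqref{3.11} into \eqref{3.9} and reduce with \eqref{2.3} and \eqref{2.4}. The first half is fine. However, the displayed formula at the heart of the second half is wrong, and as written the computation would not close: when you substitute $\Phi(a,b,c)=F(Jc,a,b)$ into terms whose \emph{third} argument already carries a $J$, you must use $J(Jz)=-z$. Thus $\Phi(x,Jy,Jz)=F(J(Jz),x,Jy)=-F(z,x,Jy)$, not $F(Jz,x,Jy)$; likewise $\Phi(y,Jz,Jx)=-F(x,y,Jz)$ and $-2\Phi(z,Jx,Jy)=+2F(y,z,Jx)$. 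Your version leaves terms of the form $F(J\cdot,J\cdot,\cdot)$ with a $J$ stuck in the first slot, which the symmetries \eqref{2.3} (acting only on the last two arguments) cannot remove, so no cyclic cancellation is available.

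With the substitution done correctly one gets
\[
4p_3(x,y,z)=-F(Jz,x,y)+F(Jx,y,z)-F(z,x,Jy)-F(x,y,Jz)+2F(y,z,Jx),
\]
and the reduction is short: by \eqref{2.3}, $-F(z,x,Jy)=-F(z,Jy,x)=F(z,y,Jx)$, while \eqref{2.4} applied to $(x,y,Jz)$ together with $F(y,Jz,x)=-F(y,z,Jx)$ gives $-F(x,y,Jz)=F(Jz,x,y)-F(y,z,Jx)$. Substituting, the $F(Jz,x,y)$ terms cancel and one is left with $F(Jx,y,z)+F(y,z,Jx)+F(z,Jx,y)$, which vanishes by \eqref{2.4} applied to $(Jx,y,z)$ (note only \eqref{2.4} is needed, not \eqref{2.8}). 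So the plan is sound and matches the paper; just repair the substitution step.
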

\begin{proof}
If we suppose $p_2=0$ then \eqref{3.8} implies $N=0$. Because of
the last con\-di\-tion and \eqref{2.7} the manifold $(M,J,g)$
becomes K\"ahlerian, which is a contradiction. Therefore, $p_2\neq
0$ is valid. From equalities \eqref{3.11}, \eqref{3.9},
\eqref{2.3}, \eqref{2.4} we get $p_3=0$.
\end{proof}

\section{A canonical connection on a quasi-K\"ahler manifold  
with Norden metric}

\begin{defn}[\cite{GaMi}]\label{defn-4.1}
A natural connection with torsion tensor $T$ on an almost complex
manifold with Norden metric $(M,J,g)$ is called a \emph{canonical
connection} if
\begin{equation}\label{4.1}
    T(x,y,z)+T(y,z,x)-T(Jx,y,Jz)-T(y,Jz,Jx)=0.
\end{equation}
\end{defn}

In \cite{GaMi} it is shown that \eqref{4.1} is equivalent to the
condition
\begin{equation}\label{4.2}
        p_1=p_4=0,
\end{equation}
\ie to the condition $T\in\mathcal{T}_2\oplus\mathcal{T}_4$. The
same paper shows that on every almost complex manifold with Norden
metric $(M,J,g)$ there exists an unique canonical connection
$\nn$, and it is determined by
\begin{equation}\label{4.3}
    g(\nn_x y,z)=g(\n_x
    y,z)+\frac{1}{4}\left\{\Phi(x,y,z)-\Phi(z,x,y)-\Phi(Jz,x,Jy)\right\}.
\end{equation}

In \cite{GaMi}, the conditions for the torsion tensor $T$ of the
canonical connection are used to obtain new characteristics for
the eight classes of almost complex manifolds with Norden metric.
The class $\W_0$ of the K\"ahler manifold with Norden metric is
characterized by the condition $T(x,y,z)=0$. The class $\W_3$ of
the quasi-K\"ahler manifold with Norden metric is characterized by
the condition
\begin{equation}\label{4.4}
    T(Jx,y,z)=-T(x,y,Jz).
\end{equation}

The equalities \eqref{3.1}, \eqref{3.3} and \eqref{4.4} for the
torsion tensor $T$ of the canonical connection on a quasi-K\"ahler
manifold with Norden metric imply the properties:
\begin{equation}\label{4.5}
    T(x,y,z)=-T(y,x,z),\quad   T(Jx,y,z)=T(x,Jy,z)=-T(x,y,Jz).
\end{equation}

From \thmref{thm-3.3} and condition \eqref{4.2} we obtain
immediately the following
\begin{thm}\label{thm-4.2}
    For the torsion tensor $T$ of the canonical connection on a quasi-K\"ahler manifold with Norden
    metric the equality $T=p_2$ is valid, \ie
    $T\in\mathcal{T}_2$.
\end{thm}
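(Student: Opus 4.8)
The plan is to combine the two structural facts already at hand. From \thmref{thm-3.3} we know that on a non-K\"ahlerian quasi-K\"ahler manifold with Norden metric the torsion tensor $T$ of any natural connection satisfies $p_3=0$, while from \eqref{4.2} the defining property of the canonical connection gives $p_1=p_4=0$. Since the decomposition $\mathcal{T}=\mathcal{T}_1\oplus\mathcal{T}_2\oplus\mathcal{T}_3\oplus\mathcal{T}_4$ is a direct sum of orthogonal invariant subspaces and the projections satisfy $T=p_1(T)+p_2(T)+p_3(T)+p_4(T)$, vanishing of $p_1$, $p_3$, $p_4$ forces $T=p_2(T)$, \ie $T\in\mathcal{T}_2$.

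First I would note explicitly that the canonical connection is in particular a natural connection, so both \thmref{thm-3.3} and the equivalence \eqref{4.2} apply to its torsion tensor. Then I would write $T = p_1 + p_2 + p_3 + p_4$ (with each $p_i$ evaluated at $T$), substitute $p_1 = 0$, $p_4 = 0$ from \eqref{4.2} and $p_3 = 0$ from \eqref{3.12}, and conclude $T = p_2$. The only case not yet covered is the K\"ahlerian one; there $T=0$ identically (the class $\W_0$ is characterized by $T(x,y,z)=0$ as recalled just before \eqref{4.4}), so $T=0=p_2$ holds trivially, and the statement $T=p_2\in\mathcal{T}_2$ is valid in all cases.

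There is essentially no obstacle here — the theorem is a direct corollary, as the phrase ``we obtain immediately'' in the text signals. The one point worth a sentence of care is making sure the K\"ahler case is not excluded: \thmref{thm-3.3} was stated for non-K\"ahlerian manifolds, so I would handle that case separately by invoking $T=0$. After that the proof is a single line of bookkeeping with the projection operators.

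\begin{proof}
The canonical connection is a natural connection, so its torsion tensor $T$ satisfies \eqref{4.2}, \ie $p_1=p_4=0$. If $(M,J,g)$ is non-K\"ahlerian, then \thmref{thm-3.3} gives $p_3=0$ as well. Since $\mathcal{T}=\mathcal{T}_1\oplus\mathcal{T}_2\oplus\mathcal{T}_3\oplus\mathcal{T}_4$ and $T=p_1+p_2+p_3+p_4$, we obtain $T=p_2$, \ie $T\in\mathcal{T}_2$. If $(M,J,g)$ is K\"ahlerian, then $T=0$ and the equality $T=p_2$ holds trivially.
\end{proof}
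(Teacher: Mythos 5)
Your proof is correct and is exactly the argument the paper intends: combine $p_1=p_4=0$ from \eqref{4.2} with $p_3=0$ from \thmref{thm-3.3} and use $T=p_1+p_2+p_3+p_4$. The explicit handling of the K\"ahlerian case (where $T=0$) is a small point of care the paper omits, but otherwise the two arguments coincide.
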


Equalities \eqref{3.11} and \eqref{4.3} imply the following
\begin{prop}\label{prop-4.3}
    The canonical connection $\nn$ on a quasi-K\"ahler manifold with Norden metric
    $(M,J,g)$ is determined by
\begin{equation}\label{4.7}
    \nn_x y=\n_x y+\frac{1}{4}\left\{\left(\n_y J\right)Jx-\left(\n_{Jy} J\right)x+2\left(\n_{x} J\right)Jy\right\}.
\end{equation}
\end{prop}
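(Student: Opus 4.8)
The plan is to take the explicit formula \eqref{4.3} for the canonical connection $\nn$, which is valid on an arbitrary almost complex manifold with Norden metric, and substitute into it the quasi-K\"ahler identity \eqref{3.11}, $\Phi(a,b,c)=F(Jc,a,b)$. Writing out the three $\Phi$-terms of \eqref{4.3} this way, and using $J^2x=-x$ from \eqref{2.1} to simplify $\Phi(Jz,x,Jy)=F\bigl(J(Jy),Jz,x\bigr)=-F(y,Jz,x)$, turns the bracketed correction term of \eqref{4.3} into a linear combination of three values of $F$. In exactly one of them --- the image $F(Jz,x,y)$ of $\Phi(x,y,z)$ --- the argument $z$ sits in the first (differentiation) slot of $F$; in the other two it occupies one of the last two slots, possibly with a $J$.

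The next step is to convert every value of $F$ back into $\n J$ by means of \eqref{2.2}, $F(a,b,c)=g\bigl((\n_aJ)b,c\bigr)$, so that the whole correction term appears as $g$ of a single vector field against $z$, giving $4(\nn_xy-\n_xy)$. When $z$ (possibly as $Jz$) lies in one of the last two slots one first uses the symmetry $F(a,b,c)=F(a,c,b)$ of \eqref{2.3} to move it to the third slot, and if it shows up there as $Jz$ one uses $g(Jv,w)=g(v,Jw)$ (immediate from \eqref{2.1}) together with $(\n_aJ)J=-J(\n_aJ)$ (from $\n(J^2)=0$) to rewrite $g\bigl((\n_aJ)b,Jz\bigr)$ as $-g\bigl((\n_aJ)Jb,z\bigr)$. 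This disposes of all terms except $F(Jz,x,y)$. For that one there is no way to extract $z$ using only the algebraic relations \eqref{2.3}; instead one applies the quasi-K\"ahler condition in the form \eqref{2.4} (the vanishing of the cyclic sum of $F$) --- or, equivalently, \eqref{2.7} or \eqref{2.8} --- to an appropriate triple, which rewrites $F(Jz,x,y)$ as a sum of values of $F$ in which $z$ has been pushed into the last slot. Substituting back, converting once more with \eqref{2.2}, and collecting coefficients should leave $4(\nn_xy-\n_xy)$ equal to $(\n_yJ)Jx-(\n_{Jy}J)x+2(\n_xJ)Jy$, which is \eqref{4.7}.

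The step I expect to be the real obstacle is precisely this last move --- getting $z$ out of the differentiation slot of $F(Jz,x,y)$. The symmetries \eqref{2.3} cannot do it, so one is forced to invoke the first-Bianchi-type identity that defines the class $\W_3$; and since \eqref{2.4}, \eqref{2.7} and \eqref{2.8} are just different faces of that single condition, some care is needed in choosing which form to apply (and to which permutation of $x,y,z$) so that, after re-expressing everything through \eqref{2.2}, the terms organize themselves into exactly the three summands appearing in \eqref{4.7} with no parasitic $(\n_{J\,\cdot}J)\,\cdot$ contribution left over. Everything else --- the repeated, bookkeeping-style use of \eqref{2.2}, \eqref{2.3} and \eqref{3.11} --- is routine though slightly tedious.
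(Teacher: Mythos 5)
Your strategy is the same as the paper's (the paper justifies Proposition~\ref{prop-4.3} with the single remark that \eqref{3.11} and \eqref{4.3} imply it), and you have correctly isolated the only non-mechanical step: converting $F(Jz,x,y)$ into terms with $z$ in the metric slot by means of the cyclic identity \eqref{2.4}. That step is in fact unambiguous and clean: applying \eqref{2.4} to the triple $(Jz,x,y)$ and then \eqref{2.3} gives
\begin{equation*}
F(Jz,x,y)=-F(x,y,Jz)-F(y,Jz,x)=F(x,Jy,z)+F(y,Jx,z)=g\bigl((\n_xJ)Jy+(\n_yJ)Jx,\,z\bigr).
\end{equation*}
The problem is that when the bookkeeping is finished you do not land on \eqref{4.7}. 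With $\Phi(a,b,c)=F(Jc,a,b)$ one gets $-\Phi(z,x,y)=-F(Jy,x,z)=-g\bigl((\n_{Jy}J)x,z\bigr)$ and $-\Phi(Jz,x,Jy)=+F(y,Jz,x)=-F(y,Jx,z)=-g\bigl((\n_yJ)Jx,z\bigr)$, so the bracket in \eqref{4.3} collapses to $g\bigl((\n_xJ)Jy-(\n_{Jy}J)x,\,z\bigr)$: the two copies of $(\n_yJ)Jx$ cancel and the coefficient of $(\n_xJ)Jy$ is $1$, not $2$. The ``collecting coefficients'' step you deferred therefore fails; the output differs from \eqref{4.7} by $\frac14\bigl\{(\n_xJ)Jy+(\n_yJ)Jx\bigr\}$.

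The discrepancy is not in your algebra. The tensor $Q(x,y,z)=\frac14\{F(x,Jy,z)-F(Jy,x,z)\}$ produced by the literal \eqref{4.3} does not even satisfy the skew-symmetry \eqref{3.6} required of a natural connection, whereas the $Q$ of \eqref{4.8} does satisfy \eqref{3.5} and \eqref{3.6} and has torsion $T=\frac14 N\in\mathcal{T}_2$, consistent with \eqref{4.9}, \eqref{4.11} and the formula for $Q^{C}$ in the last section. So \eqref{4.7} is the correct statement, but \eqref{4.3} as printed cannot be its source: the first term must carry coefficient $2$, i.e.\ $4Q(x,y,z)=2\Phi(x,y,z)-\Phi(z,x,y)-\Phi(Jz,x,Jy)$, which does reproduce \eqref{4.11} under your scheme. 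To make your proof airtight you should either work from that corrected formula, or bypass \eqref{4.3} entirely: verify directly that \eqref{4.7} defines a natural connection (check \eqref{3.5} and \eqref{3.6}, each a short consequence of \eqref{2.3} and \eqref{2.4}) whose torsion equals $\frac14 N=p_2$, so that $p_1=p_4=0$, and then invoke the uniqueness of the canonical connection.
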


Let $\nn$ be the canonical connection on a quasi-K\"ahler manifold
with Norden metric $(M,J,g)$. According to \eqref{4.7}, for the
tensor $Q$ of the transformation $\n \rightarrow\nn$ we have
\begin{equation}\label{4.8}
    Q(x,y)=\frac{1}{4}\left\{\left(\n_y J\right)Jx-\left(\n_{Jy} J\right)x+2\left(\n_{x}
    J\right)Jy\right\}.
\end{equation}
Then
\[
    T(x,y)=\frac{1}{2}\left\{\left(\n_x J\right)Jy+\left(\n_{Jx} J\right)y\right\}.
\]

Hence, having in mind \eqref{2.2} and \eqref{3.1}, we obtain
\begin{equation}\label{4.9}
    T(x,y,z)=\frac{1}{2}\left\{F(x,Jy,z)+F(Jx,y,z)\right\}.
\end{equation}
Substituting $y\leftrightarrow z$ into the above, according to
\eqref{2.3}, we get
\[
    T(x,z,y)=\frac{1}{2}\left\{-F(x,Jy,z)+F(Jx,y,z)\right\}.
\]
Subtracting this from \eqref{4.9} and replacing $y$ with $Jy$ in
the result, we have
\begin{equation}\label{4.10}
    F(x,y,z)=T(x,z,Jy)-T(x,Jy,z).
\end{equation}

The equalities \eqref{4.8}, \eqref{3.1} and \eqref{2.2} imply
\begin{equation}\label{4.11}
    Q(x,y,z)=\frac{1}{4}\left\{F(y,Jx,z)-F(Jy,x,z)+2F(x,Jy,z)\right\}.
\end{equation}
Hence, because of \eqref{2.3} and \eqref{2.4}, we conclude that
\begin{equation}\label{4.12}
    Q(x,y,z)=-Q(y,x,z)+F(Jz,x,y).
\end{equation}

\begin{thm}\label{thm-4.4}
    Let $\tau'$ and $\tau$ be the scalar curvatures for the canonical connection $\nn$ and the Levi-Civita connection $\n$, respectively,
    on a quasi-K\"ahler manifold with Norden metric.
     Then
    \begin{equation}\label{4.13}
        \tau'=\tau-\frac{1}{4}\nJ.
    \end{equation}
\end{thm}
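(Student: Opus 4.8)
The plan is to expand the curvature tensor $R'$ of the canonical connection $\nn$ in terms of the Levi-Civita curvature $R$ and the transformation tensor $Q$, then take the two successive $g$-traces that define $\tau'$. Recall the general formula for the curvature of a connection obtained from $\n$ by adding $Q$: in $(0,4)$-form,
\begin{equation}\label{pp-1}
R'(x,y,z,w)=R(x,y,z,w)+(\n_x Q)(y,z,w)-(\n_y Q)(x,z,w)+Q(x,Q(y,z),w)-Q(y,Q(x,z),w),
\end{equation}
where I have already used $\nn g=0$ to lower the last index and the metric compatibility of $\n$ to move it inside the covariant derivatives. Tracing \eqref{pp-1} with $g^{ij}$ over the pair $(x,w)=(e_i,e_j)$ gives the Ricci tensor $\rho'$, and a further trace with $g^{ks}$ over $(y,z)=(e_k,e_s)$ gives $\tau'$. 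So $\tau'-\tau$ is the sum of a ``divergence-type'' term coming from the two $\n Q$ pieces and an ``algebraic'' term quadratic in $Q$.

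Next I would argue that the divergence term vanishes. After the double trace, the two $\n Q$ contributions combine into something of the form $g^{ij}g^{ks}\left\{(\n_{e_i}Q)(e_k,e_s,e_j)-(\n_{e_k}Q)(e_i,e_s,e_j)\right\}$; using $Q(x,y,z)=-Q(x,z,y)$ from \eqref{3.6} together with the symmetries \eqref{2.3}–\eqref{2.4} of $F$ (equivalently the trace identities behind \eqref{2.10}), each such fully contracted derivative of $Q$ is seen to be either zero or a divergence of a globally defined vector field, and in the pointwise identity \eqref{4.13} the relevant combination cancels outright. Concretely, from \eqref{4.11} one computes the two independent traces of $Q$, e.g. $g^{ij}Q(e_i,x,e_j)$ and $g^{ij}Q(x,e_i,e_j)$, and checks—via \eqref{2.3} and the quasi-K\"ahler condition \eqref{2.4}—that they are proportional to the trace $g^{ij}(\n_{e_i}J)e_j$, which enters only through the quadratic term. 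The upshot is that only the $Q*Q$ term survives the double trace.

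Finally I would evaluate the algebraic term $g^{ij}g^{ks}\left\{Q(e_i,Q(e_k,e_s),e_j)-Q(e_k,Q(e_i,e_s),e_j)\right\}$ by substituting \eqref{4.11}, expanding the nine resulting $F*F$ contractions, and simplifying with \eqref{2.3}, \eqref{2.4}, \eqref{2.9} and especially \eqref{2.10}. Each surviving contraction is a constant multiple of $\nJ$ (some via the ``straight'' pairing \eqref{2.9}, some via the ``twisted'' pairing \eqref{2.10}, which for quasi-K\"ahler manifolds equals $-\tfrac12\nJ$), and collecting coefficients yields exactly $-\tfrac14\nJ$. The main obstacle is precisely this bookkeeping in the last step: keeping track of which index pairing in each $F*F$ term corresponds to \eqref{2.9} versus \eqref{2.10}, and correctly handling the sign changes produced by moving $J$ through the arguments via \eqref{2.3}; the factor $\tfrac14$ in \eqref{4.3}/\eqref{4.8} means the quadratic term carries an overall $\tfrac{1}{16}$, so the nine terms must combine to $-4\nJ$ before that prefactor is applied, leaving little room for arithmetic slips.
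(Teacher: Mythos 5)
Your proposal follows essentially the same route as the paper's proof: the standard curvature comparison formula for $\nn=\n+Q$, the observation that the traced $\n Q$ terms drop out, and the evaluation of the single surviving quadratic term by substituting \eqref{4.8}/\eqref{4.11} and reducing to \eqref{2.9} and \eqref{2.10}, with the same overall factor $\tfrac{1}{16}\cdot(-4)=-\tfrac14$. The one point to tighten is the justification that the derivative terms vanish: they do so pointwise because the traces $g^{ij}Q(e_i,e_j,z)$ and $g^{ij}Q(e_i,z,e_j)$ are identically zero (the paper's \eqref{4.14}, obtained from \eqref{4.12} together with $g^{ij}F(Jz,e_i,e_j)=0$), hence so are their covariant derivatives --- an appeal to these traces being merely ``a divergence of a globally defined vector field'' would not suffice for the pointwise identity \eqref{4.13}.
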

\begin{proof}
According to \eqref{2.1} and \eqref{2.3}, for an almost complex
manifold with Norden metric we have $g^{ij}F(Jz,e_i,e_j)=0$. Then,
from \eqref{4.12}, after contraction by $x=e_i$, $y=e_j$, we
obtain
\begin{equation}\label{4.14}
    g^{ij}Q(e_i,e_j,z)=0.
\end{equation}
Because of $\n g_{ij}=0$ (for the Levi-Civita connection $\n$) and
\eqref{4.14}, we get
\begin{equation}\label{4.15}
    g^{ij}\left(\n_x Q\right)(e_i,e_j,z)=0.
\end{equation}

It is known that for the curvature tensors $R'$ and $R$ of $\nn$
and $\n$, respectively,  the following  is valid:
\[
\begin{split}
&R'(x,y,z,w)=R(x,y,z,w)+ \left(\n_x Q\right)(y,z,w)-\left(\n_y
Q\right)(x,z,w)
\\[4pt]
&\phantom{K(x,y,z,w)=R(x,y,z,w)}+Q\bigl(x,Q(y,z),w\bigr)-Q\bigl(y,Q(x,z),w\bigr).
\end{split}
\]
Then from \eqref{3.6} and \eqref{3.1} it follows that
\begin{equation}\label{4.16}
\begin{split}
&R'(x,y,z,w)=R(x,y,z,w)+ \left(\n_x Q\right)(y,z,w)-\left(\n_y
Q\right)(x,z,w)
\\[4pt]
&\phantom{K(x,y,z,w)=}-g\bigl(Q(x,w),Q(y,z)\bigr)+g\bigl(Q(y,w),Q(x,z)\bigr)
\end{split},
\end{equation}
for an almost complex manifold with Norden metric $(M,J,g)$.

Using a contraction by $x=e_i$, $w=e_j$ in \eqref{4.16} and combining
\eqref{3.6}, \eqref{4.14} and \eqref{4.15}, we find that
the Ricci tensors $\rho'$ and $\rho$ for $\nn$ and $\n$ satisfy
\begin{equation}\label{4.17}
    \rho'(y,z)=\rho(y,z)+g^{ij}\left(\n_{e_i}
    Q\right)(y,z,e_j)+g^{ij}g\bigl(Q(y,e_j),Q(e_i,z)\bigr).
\end{equation}

Similarly, after a contraction by $y=e_k$, $z=e_s$ in \eqref{4.17} and
according to \eqref{4.15}, we obtain
\begin{equation}\label{4.18}
    \tau'=\tau+g^{ij}g^{ks}g\bigl(Q(e_k,e_j),Q(e_i,e_s)\bigr),
\end{equation}
for the scalar curvatures $\tau'$ and
$\tau$ for $\nn$ and $\n$.
The equalities \eqref{4.18} and \eqref{4.8} imply
\begin{equation}\label{4.19}
    g^{ij}g^{ks}g\bigl(Q(e_k,e_j),Q(e_i,e_s)\bigr) =\frac{1}{16}g^{ij}g^{ks}g(P_{jk},P_{si})
\end{equation}
for a quasi-K\"ahler manifold with Norden metric $(M,J,g)$,
where
\[
P_{jk}=\left(\n_{e_j}J\right)Je_k-\left(\n_{Je_j}J\right)e_k+2\left(\n_{e_k}J\right)Je_j.
\]
From \eqref{4.19}, \eqref{2.1}, \eqref{2.9} and \eqref{2.10} we
get
\[
g^{ij}g^{ks}g\bigl(Q(e_k,e_j),Q(e_i,e_s)\bigr)=-\frac{1}{4}\nJ.
\]
The last equality and \eqref{4.18} imply \eqref{4.13}.
\end{proof}

\begin{cor}\label{cor-4.5}
A quasi-K\"ahler manifold with Norden metric is
isotropic-K\"ah\-ler\-ian if and only if the scalar curvatures for
the canonical connection and the Levi-Civita connection are equal.
\end{cor}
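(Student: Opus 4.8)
The plan is to read the statement off directly from \thmref{thm-4.4}. Recall that, by the definition recalled in Section~1 around equation~\eqref{2.9}, a quasi-K\"ahler manifold with Norden metric is \emph{isotropic-K\"ahlerian} precisely when $\nJ=0$. On the other hand, \thmref{thm-4.4} furnishes the exact relation
\[
\tau'-\tau=-\frac{1}{4}\nJ
\]
between the scalar curvature $\tau'$ of the canonical connection $\nn$ and the scalar curvature $\tau$ of the Levi-Civita connection $\n$. Consequently $\tau'=\tau$ holds if and only if $\nJ=0$, which is exactly the isotropic-K\"ahler condition; this establishes the asserted equivalence.

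Thus the proof is a one-line deduction: invoke \eqref{4.13}, note that the left-hand side vanishes iff $\nJ$ vanishes, and quote the definition of isotropic-K\"ahler manifold with Norden metric. There is no real obstacle, since the substantive work — the contraction argument leading to \eqref{4.18} and the identity $g^{ij}g^{ks}g\bigl(Q(e_k,e_j),Q(e_i,e_s)\bigr)=-\tfrac14\nJ$ — has already been carried out in the proof of \thmref{thm-4.4}. The only point worth stressing is that $\nJ$, though called a square norm, is not sign-definite because $g$ has signature $(n,n)$, so $\nJ=0$ does not force $\n J=0$; this is precisely why the class of isotropic-K\"ahler manifolds with Norden metric strictly contains the class $\W_0$ of K\"ahler manifolds with Norden metric, and why the corollary is not merely a restatement of the characterization $T(x,y,z)=0$ of $\W_0$.
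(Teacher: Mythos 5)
Your proposal is correct and coincides with the paper's (implicit) argument: the corollary is stated as an immediate consequence of \thmref{thm-4.4}, since \eqref{4.13} gives $\tau'=\tau$ iff $\nJ=0$, which is the definition of isotropic-K\"ahler. Your added remark about the indefiniteness of the square norm is accurate and consistent with the paper's discussion following \eqref{2.9}.
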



\section{A canonical connection with K\"ahler curvature tensor on a quasi-K\"ahler manifold with Norden metric}

The curvature tensor $R'$ of  a natural connection $\nn$ on an
almost complex manifold with Norden metric $(M,J,g)$ satisfies
property \eqref{2.11}, according to \eqref{4.16}. Since $\nn J=0$,
the property \eqref{2.13} is also valid. Therefore, $R'$ is
K\"ahlerian iff the first Bianchi identity \eqref{2.12} is
satisfied. On the other hand, it is known (\cite{KoNo}) that for
every linear connection $\nn$ with a torsion $T$ and a curvature
tensor $R'$ the following equality (the first Bianchi identity) is
valid
\[
\mathop{\s} \limits_{x,y,z} R'(x,y)z=\mathop{\s} \limits_{x,y,z}
\bigl\{\left(\nn_{x} T\right)(y,z)+T(T(x,y),z)\bigr\}.
\]
Since we have $\nn g=0$,  the last equality implies
\[
\mathop{\s} \limits_{x,y,z} R'(x,y,z,w)= \mathop{\s}
\limits_{x,y,z} \bigl\{\left(\nn_{x}
T\right)(y,z,w)+T(T(x,y),z,w)\bigr\}.
\]
Thus, $R'$ satisfies \eqref{2.12} iff
\begin{equation}\label{5.1}
\mathop{\s} \limits_{x,y,z} \bigl\{\left(\nn_{x}
T\right)(y,z,w)+T(T(x,y),z,w)\bigr\}=0.
\end{equation}
This leads to the following
\begin{lem}\label{lem-5.1}
The curvature tensor for the natural connection $\nn$ with a
torsion $T$ on an almost complex manifold with Norden metric is
K\"ahlerian iff \eqref{5.1} is valid.
\end{lem}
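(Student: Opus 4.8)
The plan is to assemble into a single equivalence the three observations recorded in the paragraph preceding the statement. First I would note that, by \eqref{4.16}, the $(0,4)$-tensor $R'$ of any natural connection $\nn$ is skew-symmetric both in its first pair and in its last pair of arguments, so it satisfies \eqref{2.11} with no extra hypothesis. Next, since $\nn$ is natural we have $\nn J=0$, and hence $R'(x,y)Jz=JR'(x,y)z$; combining this with the Norden identity $g(Ju,Jv)=-g(u,v)$ from \eqref{2.1} yields $R'(x,y,Jz,Jw)=-R'(x,y,z,w)$, which is precisely \eqref{2.13}. Thus $R'$ possesses every defining property of a K\"ahler tensor except possibly the first Bianchi identity \eqref{2.12}, so $R'$ is K\"ahlerian if and only if \eqref{2.12} holds.

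It then remains only to re-express \eqref{2.12} in terms of the torsion $T$. Here I would invoke the general first Bianchi identity valid for an arbitrary linear connection with torsion (\cite{KoNo}), namely $\mathop{\s}\limits_{x,y,z}R'(x,y)z=\mathop{\s}\limits_{x,y,z}\{(\nn_{x}T)(y,z)+T(T(x,y),z)\}$, which is quoted explicitly in the excerpt. Applying $g(\,\cdot\,,w)$ to both sides and using $\nn g=0$ to commute the connection with the metric converts this into the $(0,4)$-form $\mathop{\s}\limits_{x,y,z}R'(x,y,z,w)=\mathop{\s}\limits_{x,y,z}\{(\nn_{x}T)(y,z,w)+T(T(x,y),z,w)\}$, also displayed above. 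Therefore the cyclic sum on the left vanishes for all $w$ if and only if \eqref{5.1} holds, and combined with the previous paragraph this gives the asserted equivalence.

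I do not anticipate any real obstacle: each ingredient --- the symmetries of $R'$ coming from \eqref{4.16}, the passage from $\nn J=0$ and \eqref{2.1} to \eqref{2.13}, and the torsion Bianchi identity --- is either directly quoted or a one-line computation. The only point demanding a little care is the bookkeeping that $\nn g=0$ genuinely licenses replacing the $(1,3)$-Bianchi identity by its metric contraction without leaving a residual $\nn g$ term; this, however, is immediate from the Leibniz rule. So the proof amounts to stringing the already-stated facts together in the right order.
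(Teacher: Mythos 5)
Your proposal is correct and follows essentially the same route as the paper: both reduce the K\"ahler property of $R'$ to the first Bianchi identity \eqref{2.12} by noting that \eqref{2.11} follows from \eqref{4.16} and \eqref{2.13} from $\nn J=0$, and then rewrite that identity in terms of $T$ via the torsion Bianchi formula of \cite{KoNo} together with $\nn g=0$. The only difference is that you spell out the one-line derivation of \eqref{2.13} from $\nn J=0$ and \eqref{2.1}, which the paper leaves implicit.
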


We substitute $Jz$ for $z$ and $Jw$ for $w$  in \eqref{5.1}.
Hence, according to \eqref{4.5}, we obtain
\[
\begin{split}
&\left(\nn_{x} T\right)(y,z,w)-\left(\nn_{y}
T\right)(z,x,w)+\left(\nn_{Jz} T\right)(x,y,Jw)\\[4pt]
&+T(T(x,y),z,w)+T(T(y,Jz),x,w)+T(T(Jz,x),y,Jw)=0.
\end{split}
\]

We subtract the last equality from \eqref{5.1}, and substitute
$Jx$ for  $x$ and $Jw$ for $w$ in the result. Then, using
\eqref{4.5}, we get
\begin{equation}\label{5.2}
\begin{split}
&\left(\nn_{z} T\right)(x,y,z)+\left(\nn_{Jz}
T\right)(x,Jy,w)\\[4pt]
&+2T(T(y,z),x,w)+2T(T(z,Jx),y,Jw)=0.
\end{split}
\end{equation}

We substitute $Jy$, $Jz$ for $y$, $z$, respectively, and we apply
\eqref{4.5}. We subtract the obtained equality from \eqref{4.2}
and we reapply \eqref{4.5}. This leads to
\[
T(T(z,x),y,w)=0.
\]
Hence,  \eqref{4.10} and \eqref{2.3} imply
\[
F(Jy,w,T(z,x))=T(y,w,T(z,x)),
\]
and from \eqref{2.2} and \eqref{3.1} we obtain
\begin{equation}\label{5.3}
    g\bigl(T(x,z),T(y,w)-\left(\n_{Jy}J\right)w\bigr)=0.
\end{equation}

Since, according to \eqref{4.9} and \eqref{2.2}, we have
\[
T(y,w)=\frac{1}{2}\big\{\left(\n_{y}J\right)Jw+\left(\n_{Jy}J\right)w
\bigr\},
\]
the following equality is valid
\[
T(y,w)-\left(\n_{Jy}J\right)w=\frac{1}{2}\bigl\{\left(\n_{y}J\right)Jw-\left(\n_{Jy}J\right)w
\bigr\}.
\]
Thus, using \eqref{5.3}, we arrive at the following
\begin{thm}\label{thm-5.2}
Let $(M,J,g)$ be a quasi-K\"ahler manifold with Norden metric,
whose canonical connection has a K\"ahler curvature tensor. Then
the following equality is valid
\[
g\bigl(\left(\n_{x}J\right)Jz+\left(\n_{Jx}J\right)z,\left(\n_{Jy}J\right)w-\left(\n_{y}J\right)Jw
\bigr)=0.
\]
\end{thm}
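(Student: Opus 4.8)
The plan is to follow exactly the chain of identities the excerpt has already set up, and to track carefully where each substitution lands. Since \lemref{lem-5.1} tells us that the canonical connection has K\"ahler curvature tensor iff \eqref{5.1} holds, I would start from \eqref{5.1} and perform the two reductions already indicated: first replace $z \mapsto Jz$, $w \mapsto Jw$ and use the symmetry relations \eqref{4.5} for the torsion of the canonical connection on a quasi-K\"ahler manifold; then subtract the new identity from \eqref{5.1}, and apply the further substitution $x \mapsto Jx$, $w \mapsto Jw$, again simplifying with \eqref{4.5}. This yields \eqref{5.2}. The point of these manipulations is that the covariant-derivative terms $(\nn T)$ must be made to cancel, leaving only an algebraic (torsion-quadratic) expression; the bookkeeping is purely mechanical but has to be done with care since each term of the cyclic sum transforms differently under $J$.

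Next I would carry out the step ``substitute $Jy, Jz$ for $y, z$ in \eqref{5.2}, apply \eqref{4.5}, subtract, reapply \eqref{4.5}'' to eliminate the remaining derivative terms from \eqref{5.2} and arrive at the purely algebraic identity $T(T(z,x),y,w)=0$. Here the crucial structural input is \eqref{4.5}: because $T(Jx,y,z) = T(x,Jy,z) = -T(x,y,Jz)$, replacing a pair of slots by their $J$-images and then subtracting kills the connection terms (which pick up a single $\nn$ and hence only one $J$ swap's worth of sign) while doubling or cancelling the quadratic terms. Once $T(T(z,x),y,w)=0$ is in hand, the rest is a translation exercise: by \eqref{4.10} one has $F(Jy,w,T(z,x)) = T(y,w,T(z,x))$ after appropriate slot manipulation and use of \eqref{2.3}, and then \eqref{2.2}, \eqref{3.1} convert $F$ back into $g((\n_{Jy}J)w, \cdot)$, producing \eqref{5.3}: $g\bigl(T(x,z),\, T(y,w) - (\n_{Jy}J)w\bigr)=0$.

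Finally I would substitute the explicit formula for the torsion, $T(y,w) = \tfrac12\{(\n_y J)Jw + (\n_{Jy}J)w\}$ from \eqref{4.9}, \eqref{2.2}. Then $T(y,w) - (\n_{Jy}J)w = \tfrac12\{(\n_y J)Jw - (\n_{Jy}J)w\}$, and likewise $T(x,z) = \tfrac12\{(\n_x J)Jz + (\n_{Jx}J)z\}$, so that \eqref{5.3} becomes, up to the harmless factor $\tfrac14$,
\[
g\bigl((\n_x J)Jz + (\n_{Jx}J)z,\ (\n_{Jy}J)w - (\n_y J)Jw\bigr) = 0,
\]
which is the assertion of \thmref{thm-5.2}.

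I expect the main obstacle to be the first two reduction steps, i.e.\ correctly producing \eqref{5.2} and then the algebraic relation $T(T(z,x),y,w)=0$: one has to apply several $J$-substitutions to cyclic sums and be confident that the $(\nn T)$ terms really do cancel in pairs. The danger is a sign error or a mislabeled slot in one of the cyclic terms, which would corrupt the final quadratic expression. Everything after $T(T(z,x),y,w)=0$ is a routine application of the already-established dictionary between $T$, $Q$, $F$ and $\n J$ (equations \eqref{4.9}, \eqref{4.10}, \eqref{2.2}, \eqref{2.3}, \eqref{3.1}), so I would not expect difficulty there beyond keeping track of the factors of $\tfrac12$.
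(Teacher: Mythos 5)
Your proposal reproduces the paper's own argument step for step: reduce \eqref{5.1} via the two $J$-substitution/subtraction passes to \eqref{5.2}, then to the algebraic identity $T(T(z,x),y,w)=0$, translate this through \eqref{4.10}, \eqref{2.3}, \eqref{2.2}, \eqref{3.1} into \eqref{5.3}, and finally insert $T(y,w)=\tfrac12\{(\n_yJ)Jw+(\n_{Jy}J)w\}$ to obtain the stated identity (up to an irrelevant overall sign and factor $\tfrac14$). This is exactly the route taken in the paper, and the plan is correct.
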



\section{A canonical connection with parallel torsion on a quasi-K\"ahler manifold with Norden metric}

In this section we consider a canonical connection $\nn$ with
parallel torsion $T$ (\ie $\nn T=0$) on a quasi-K\"ahler manifold
with Norden metric $(M,J,g)$.

According to the Hayden theorem (\cite{Hay})
\begin{equation}\label{6.2}
    Q(x,y,z)=\frac{1}{2}\bigl\{
    T(x,y,z)-T(y,z,x)+T(z,x,y)\bigr\}.
\end{equation}

Combining this with \eqref{3.2}, \eqref{3.5}, \eqref{4.11}, leads
to the following
\begin{prop}\label{prop-6.1}
Let $\nn$ be a natural connection on an almost complex manifold
with Norden metric $(M,J,g)$. Then the tensors $T$, $Q$ and $F$
are parallel or non-parallel at the same time with respect to
$\nn$.
\end{prop}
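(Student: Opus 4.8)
The plan is to exploit the fact that the three tensors $T$, $Q$ and $F$ are tied together by identities which are purely \emph{pointwise-algebraic}, in the sense that each of them expresses one tensor as a fixed linear combination of another, using only permutations of arguments, the metric $g$ (to raise or lower an index when passing between $(0,3)$- and $(1,2)$-tensors, as in \eqref{3.1}), and the substitution of $J$ into one or more slots. Concretely I would use \eqref{3.2} (which gives $T$ from $Q$), the Hayden formula \eqref{6.2} (which gives $Q$ from $T$), \eqref{3.5} (which gives $F$ from $Q$), and \eqref{4.11} (which gives $Q$ from $F$). None of these relations contains a covariant derivative.

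First I would record the mechanism that makes the argument run: for a natural connection $\nn$ we have $\nn g=\nn J=0$, hence for every vector field $w$ the operator $\nn_w$ commutes with each operation appearing in those relations. Permuting arguments obviously commutes with $\nn_w$; since $\nn_w g=0$, so does any contraction with $g^{ij}$ and the musical isomorphism in \eqref{3.1}; and since $\nn_w J=0$, one has $\nn_w\bigl(Q(\cdot,\cdot,J\,\cdot)\bigr)=(\nn_w Q)(\cdot,\cdot,J\,\cdot)$ and likewise with $J$ in any other slot, the correction term $Q(\cdot,\cdot,(\nn_w J)\,\cdot)$ vanishing.

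Next I would covariantly differentiate each of \eqref{3.2}, \eqref{6.2}, \eqref{3.5}, \eqref{4.11} with respect to $\nn$. By the previous remark this just reproduces the same four relations with $T,Q,F$ replaced by $\nn_w T,\nn_w Q,\nn_w F$. From the differentiated \eqref{3.2} and \eqref{6.2} one reads off $\nn Q=0\Leftrightarrow\nn T=0$; from the differentiated \eqref{3.5}, $\nn Q=0\Rightarrow\nn F=0$; and from the differentiated \eqref{4.11}, $\nn F=0\Rightarrow\nn Q=0$. Chaining these shows that $\nn T=0$, $\nn Q=0$ and $\nn F=0$ are mutually equivalent, which is exactly the assertion that $T$, $Q$ and $F$ are parallel, or non-parallel, simultaneously.

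The only point that genuinely needs care — the ``hard part'', such as it is — is the implication $\nn F=0\Rightarrow\nn Q=0$: in contrast to \eqref{3.2}--\eqref{3.5}, an identity recovering $Q$ from $F$ alone is not available for a completely arbitrary natural connection, so one must invoke the explicit formula \eqref{4.11} valid in the situation under consideration. Everything else is the routine check that the four cited relations are tensorial and that $\nn_w$ may be applied term by term, which is guaranteed precisely by $\nn g=\nn J=0$.
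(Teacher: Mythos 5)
Your argument is correct and is essentially the paper's own proof: the author likewise just combines the Hayden formula \eqref{6.2} with \eqref{3.2}, \eqref{3.5} and \eqref{4.11}, relying on $\nn g=\nn J=0$ to differentiate these purely algebraic identities term by term and so chain the equivalences $\nn T=0\Leftrightarrow\nn Q=0\Leftrightarrow\nn F=0$. You even flag the one genuine subtlety --- that \eqref{4.11} is only established for the canonical connection on a quasi-K\"ahler manifold, whereas the proposition is worded for an arbitrary natural connection --- a point the paper passes over in silence.
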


Let $\nn$ be a natural connection with parallel torsion on an
almost complex manifold with Norden metric $(M,J,g)$. According to
\propref{prop-6.1} we have $\nn Q=0$. Then, having in mind the
formula for the covariant derivative of $Q$, we obtain
\begin{equation}\label{6.3}
    xQ(y,z,w)-Q(\nn_x y,z,w)-Q(y,\nn_x z,w)-Q(y,z,\nn_x w)=0.
\end{equation}
Since $Q$ is the tensor of the deformation $\n\rightarrow\nn$,
applying the formula for the covariant derivative of $Q$ with
respect to $\n$ and equalities \eqref{3.1} and \eqref{3.2}, we
obtain the following
\begin{lem}\label{lem-6.2}
Let $R'$ be the curvature tensor for a natural connection $\nn$
with a parallel torsion $T$ on an almost complex manifold with
Norden metric $(M,J,g)$. Then the following equality is
valid
\begin{equation}\label{6.4}
\begin{split}
    & R'(x,y,z,w)=R(x, y,z,w)+Q(T(x,y),z,w)\\[4pt]
    &\phantom{R'(x,y,z,w)=}+g(Q(y,z),Q(x,w))-g(Q(x,z),Q(y,w)).
\end{split}
\end{equation}
\end{lem}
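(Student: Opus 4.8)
The plan is to start from the general curvature identity \eqref{4.16}, which holds for the canonical connection --- indeed for any natural connection --- on an almost complex manifold with Norden metric, and to eliminate the covariant derivatives $\left(\n_x Q\right)$ appearing there by exploiting the hypothesis. By \propref{prop-6.1}, the assumption $\nn T=0$ gives $\nn Q=0$, so equation \eqref{6.3}, which is nothing but the statement $\left(\nn_x Q\right)(y,z,w)=0$, is at our disposal.

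Next I would expand \eqref{6.3} by writing $\nn=\n+Q$ in each slot: since $\nn_x y=\n_x y+Q(x,y)$ and $Q(a,b,c)=g(Q(a,b),c)$ by \eqref{3.1}, one obtains
\[
0=\left(\nn_x Q\right)(y,z,w)=\left(\n_x Q\right)(y,z,w)-Q(Q(x,y),z,w)-Q(y,Q(x,z),w)-Q(y,z,Q(x,w)),
\]
so that $\left(\n_x Q\right)(y,z,w)$ equals the sum of the three quadratic $Q$-terms. I would then antisymmetrize this identity in $x$ and $y$: the pair $Q(Q(x,y),z,w)-Q(Q(y,x),z,w)$ collapses to $Q(T(x,y),z,w)$ by \eqref{3.2}, while the four remaining terms are converted into $g$-pairings of the vector-valued $Q$ --- directly via \eqref{3.1} for the terms of the form $Q(\cdot,z,Q(\cdot,w))$, and via the skew-symmetry \eqref{3.6} of $Q$ in its last two arguments for the terms of the form $Q(\cdot,Q(\cdot,z),w)$. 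This gives
\[
\left(\n_x Q\right)(y,z,w)-\left(\n_y Q\right)(x,z,w)=Q(T(x,y),z,w)+2g(Q(y,z),Q(x,w))-2g(Q(x,z),Q(y,w)).
\]

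Finally I would substitute this expression into \eqref{4.16}. The quadratic terms $-g(Q(x,w),Q(y,z))+g(Q(y,w),Q(x,z))$ already present in \eqref{4.16} cancel exactly one half of the new quadratic contribution (after using the symmetry of $g$), leaving precisely $R(x,y,z,w)+Q(T(x,y),z,w)+g(Q(y,z),Q(x,w))-g(Q(x,z),Q(y,w))$, which is \eqref{6.4}. The argument is entirely formal; the only delicate point is the sign bookkeeping when passing between the $(0,3)$-tensor $Q$ and the vector-valued $Q$ through \eqref{3.1} and \eqref{3.6}, and when collecting the several $g(Q,Q)$ contributions --- that is where a sign slip would most easily occur, and it is the step I would check most carefully.
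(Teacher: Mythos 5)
Your proposal is correct and follows essentially the same route as the paper: it uses $\nn T=0\Rightarrow\nn Q=0$ (Proposition \ref{prop-6.1}), expands \eqref{6.3} with $\nn=\n+Q$ to express $\left(\n_x Q\right)(y,z,w)$ through the quadratic $Q$-terms, and substitutes into \eqref{4.16} using \eqref{3.1}, \eqref{3.2} and \eqref{3.6}. The sign bookkeeping in your antisymmetrization checks out and reproduces \eqref{6.4} exactly.
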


Let $(M,J,g)$ be a quasi-K\"ahler manifold with Norden metric
whose canonical connection $\nn$ has a parallel torsion $T$. Then,
according to \eqref{3.6}, \eqref{4.12} and \eqref{2.2}, we have
\[
    Q(T(x,y),z,w)=g(Q(z,w),T(x,y))+g(\left(\n_{Jw}J\right)z,T(x,y)).
\]

The last equality and \lemref{lem-6.2} imply
\begin{thm}\label{thm-6.3}
Let $(M,J,g)$ be a quasi-K\"ahler manifold with Norden metric
whose canonical connection $\nn$ has a parallel torsion $T$. Then
for the curvature tensor $R'$ of $\nn$ we obtain
\begin{equation}\label{6.5}
\begin{split}
    & R'(x,y,z,w)=R(x, y,z,w)\\[4pt]
    &\phantom{R'(x,y,z,w)=}+g(Q(y,z),Q(x,w))-g(Q(x,z),Q(y,w))\\[4pt]
    &\phantom{R'(x,y,z,w)=}+g(Q(z,w),T(x,y))+g(\left(\n_{Jw}J\right)z,T(x,y)).
\end{split}
\end{equation}
\end{thm}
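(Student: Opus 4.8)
The plan is to start from the general formula for the curvature of a connection obtained from $\n$ by a deformation tensor $Q$, already recorded in \lemref{lem-6.2}, and then simplify the term $Q(T(x,y),z,w)$ using the special form of $Q$ on a quasi-K\"ahler manifold with Norden metric. Concretely, \lemref{lem-6.2} gives
\[
R'(x,y,z,w)=R(x,y,z,w)+Q(T(x,y),z,w)+g(Q(y,z),Q(x,w))-g(Q(x,z),Q(y,w)),
\]
so the only thing left is to rewrite $Q(T(x,y),z,w)$. First I would apply \eqref{4.12} with the arguments $(T(x,y),z,w)$ in place of $(x,y,z)$; this yields
\[
Q(T(x,y),z,w)=-Q(z,T(x,y),w)+F(Jw,T(x,y),z).
\]
Then, using the skew-symmetry \eqref{3.6} of $Q$ in its last two arguments, $-Q(z,T(x,y),w)=Q(z,w,T(x,y))$, and by \eqref{3.1} this equals $g(Q(z,w),T(x,y))$. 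For the remaining term, \eqref{2.2} gives $F(Jw,T(x,y),z)=g\bigl((\n_{Jw}J)T(x,y),z\bigr)$; using the symmetry property \eqref{2.3} of $F$, namely $F(u,v,w)=F(u,w,v)$ — more precisely I would first symmetrize to land on $F(Jw,z,T(x,y))=g\bigl((\n_{Jw}J)z,T(x,y)\bigr)$ — one arrives at $g\bigl((\n_{Jw}J)z,T(x,y)\bigr)$. Combining, $Q(T(x,y),z,w)=g(Q(z,w),T(x,y))+g\bigl((\n_{Jw}J)z,T(x,y)\bigr)$, which is exactly the displayed identity preceding the theorem. Substituting this into \lemref{lem-6.2} produces \eqref{6.5}.

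The genuinely new input, compared to the already-stated intermediate equality, is essentially bookkeeping: one must make sure the rewriting of $F(Jw,T(x,y),z)$ lands on the combination $g\bigl((\n_{Jw}J)z,T(x,y)\bigr)$ with the correct slot for $T(x,y)$, and this uses the first symmetry in \eqref{2.3}, $F(x,y,z)=F(x,z,y)$, applied at $x=Jw$. Since $\n J$ is $g$-symmetric in the appropriate sense (this is precisely \eqref{2.3} rewritten via \eqref{2.2}), the swap $F(Jw,T(x,y),z)=F(Jw,z,T(x,y))$ is legitimate. One should also note that no quasi-K\"ahler-specific identity beyond what is encoded in \eqref{4.12} is needed at this last step — the class condition \eqref{2.4} has already been used to derive \eqref{4.12} and \eqref{4.7}.

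The only real obstacle is sign/slot discipline: it is easy to misplace the argument $T(x,y)$ when passing from $Q(T(x,y),z,w)$ through \eqref{4.12} and \eqref{3.6}, and a sign error in the $F$-term would propagate to \eqref{6.5}. I would therefore carry out the two rewritings (the $-Q(z,T(x,y),w)=Q(z,w,T(x,y))$ step and the $F$-symmetrization) explicitly and cross-check against the $T$-parallel hypothesis: since $\nn T=0$, Proposition~\ref{prop-6.1} guarantees $\nn Q=0$, which is what makes \lemref{lem-6.2} applicable in the first place, so the final formula \eqref{6.5} is simply the quasi-K\"ahler specialization of \eqref{6.4}. Once the displayed identity for $Q(T(x,y),z,w)$ is verified, substitution into \eqref{6.4} finishes the proof in one line.
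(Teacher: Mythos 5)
Your proof is correct and follows essentially the same route as the paper: it substitutes into \lemref{lem-6.2} and rewrites $Q(T(x,y),z,w)$ via \eqref{4.12}, \eqref{3.6}, \eqref{3.1}, \eqref{2.2} and the symmetry $F(x,y,z)=F(x,z,y)$ from \eqref{2.3}, arriving at exactly the intermediate identity the paper displays before the theorem. The slot and sign bookkeeping you carry out explicitly is the only content of the paper's (terser) argument, and you have it right.
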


Because of \eqref{4.14} we have  $g^{ij}Q(e_i,e_j)=0$. Then, from
\eqref{6.5} via a contraction by $x=e_i$, $w=e_j$, we get
\begin{equation}\label{6.6}
\begin{split}
    &\rho'(y,z)=\rho(y,z)-g^{ij}g(Q(e_i,z),Q(y,e_j))\\[4pt]
    &\phantom{\rho'(y,z)=}+g^{ij}g(Q(z,e_j),T(e_i,y))+g^{ij}g(\left(\n_{Je_j}J\right)z,T(e_i,y)),
\end{split}
\end{equation}
where $\rho'$ and $\rho$ are the Ricci tensors for $\nn$ and $\n$,
respectively.

Combining \eqref{3.1}, \eqref{4.12}, \eqref{4.5}, \eqref{2.8},
\eqref{3.2}, \eqref{2.3} and \eqref{2.2}, we obtain
\begin{equation}\label{6.7}
\begin{split}
    &g(Q(z,e_j),T(e_i,y))=g(Q(e_j,z),Q(y,e_i))-g(Q(e_j,z),Q(e_i,y))\\[4pt]
    &\phantom{g(Q(z,e_j),T(e_i,y))=}-g(\left(\n_{Je_j}J\right)z,T(e_i,y))-g(\left(\n_{Jz}J\right)e_j,T(e_i,y)).
\end{split}
\end{equation}

We get the following equality from \eqref{6.6} and \eqref{6.7}:
\begin{equation}\label{6.8}
    \rho'(y,z)=\rho(y,z)-g^{ij}g(Q(e_j,z),Q(e_i,y))-g^{ij}g(\left(\n_{Jz}J\right)e_j,T(e_i,y)).
\end{equation}
A contraction by $y=e_k$, $z=e_s$ leads to
\begin{equation}\label{6.9}
    \tau'=\tau-g^{ij}g^{ks}g(Q(e_j,e_s),Q(e_i,e_k))-g^{ij}g^{ks}g(\left(\n_{Je_s}J\right)e_j,T(e_i,e_k)),
\end{equation}
where $\tau'$ and $\tau$ are the respective scalar curvatures for
$\nn$ and $\n$.

Using \eqref{4.7}, \eqref{2.10} and \eqref{2.9}, we get
\begin{equation}\label{6.10}
    g^{ij}g^{ks}g(Q(e_j,e_s),Q(e_i,e_k))=-\frac{1}{2}\nJ.
\end{equation}

From  \eqref{2.10} and
$2T(e_i,e_j)=\left(\n_{e_i}J\right)Je_k+\left(\n_{Je_j}J\right)e_k$
 we have
\begin{equation}\label{6.11}
    g^{ij}g^{ks}g(\left(\n_{Je_s}J\right)e_j,T(e_i,e_k))=\frac{1}{4}\nJ.
\end{equation}
Then, \eqref{6.9}, \eqref{6.10} and \eqref{6.11} imply
\[
\tau'=\tau+\frac{1}{4}\nJ.
\]

From the last equality and \eqref{4.13} we obtain the following
\begin{thm}\label{thm-6.4}
Let $(M,J,g)$ be a quasi-K\"ahler manifold with Norden metric
whose canonical connection $\nn$ has a parallel torsion $T$. Then
$(M,J,g)$ is iso\-tropic-K\"ahlerian.
\end{thm}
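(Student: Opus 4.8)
The plan is to evaluate the scalar curvature $\tau'$ of the canonical connection $\nn$ in two independent ways and compare them. One expression is already at hand: the hypotheses of \thmref{thm-4.4} hold on every quasi-K\"ahler manifold with Norden metric, so \eqref{4.13} gives $\tau'=\tau-\frac14\nJ$. The second expression will be extracted from the parallel-torsion hypothesis. Once both are available, equating them forces $\nJ=0$, which is precisely the isotropic-K\"ahler condition (cf.\ the definition of \eqref{2.9} and \corref{cor-4.5}), and the theorem follows.

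To produce the second expression I would start from \lemref{lem-6.2}: when $\nn T=0$ we also have $\nn Q=\nn F=0$ by \propref{prop-6.1}, and the curvature of $\nn$ reduces to \eqref{6.4}. Rewriting the term $Q(T(x,y),z,w)$ by means of \eqref{3.6}, \eqref{4.12} and \eqref{2.2} yields \eqref{6.5} of \thmref{thm-6.3}. I would then contract twice, first by $x=e_i$, $w=e_j$ and then by $y=e_k$, $z=e_s$, using $g^{ij}Q(e_i,e_j)=0$ from \eqref{4.14}; this passes through the Ricci identity \eqref{6.6} to a scalar identity. The cross terms appearing along the way must be simplified with the quasi-K\"ahler identity in the form \eqref{2.8}, together with \eqref{3.1}, \eqref{4.12}, \eqref{4.5}, \eqref{3.2}, \eqref{2.3} and \eqref{2.2}; this is exactly the content of \eqref{6.7}, which collapses \eqref{6.6} to \eqref{6.8} and, after the second contraction, to \eqref{6.9}.

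It then remains to evaluate the two traces in \eqref{6.9}. Using \eqref{4.7}, \eqref{2.9} and \eqref{2.10} I would show $g^{ij}g^{ks}g\bigl(Q(e_j,e_s),Q(e_i,e_k)\bigr)=-\frac12\nJ$ as in \eqref{6.10}, and using \eqref{2.10} together with $2T(e_i,e_k)=\left(\n_{e_i}J\right)Je_k+\left(\n_{Je_i}J\right)e_k$ I would obtain $g^{ij}g^{ks}g\bigl(\left(\n_{Je_s}J\right)e_j,T(e_i,e_k)\bigr)=\frac14\nJ$ as in \eqref{6.11}. Substituting both into \eqref{6.9} gives $\tau'=\tau+\frac14\nJ$. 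Comparison with \eqref{4.13} then yields $\frac12\nJ=0$, hence $\nJ=0$, so $(M,J,g)$ is isotropic-K\"ahlerian.

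The main obstacle is the index bookkeeping in the two contraction steps: the Norden metric is indefinite and $g^{ij}$ need not be diagonal in the chosen basis, so the symmetries of $F$ and $T$ must be tracked carefully when resumming. The quasi-K\"ahler condition \eqref{2.4} (equivalently \eqref{2.8}) is what makes the stray cross-terms in \eqref{6.7} cancel, so pinpointing where that identity is used is the delicate part; once \eqref{6.9} is reached, only the substitution of the two evaluated traces and the comparison with \thmref{thm-4.4} remain.
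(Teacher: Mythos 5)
Your proposal is correct and follows essentially the same route as the paper: derive $\tau'=\tau+\frac{1}{4}\nJ$ from the parallel-torsion hypothesis via \eqref{6.9}, \eqref{6.10}, \eqref{6.11}, and compare with \eqref{4.13} to force $\nJ=0$. The intermediate steps you outline (through \propref{prop-6.1}, \lemref{lem-6.2}, \thmref{thm-6.3}, and the two contractions) are exactly those of the paper.
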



\section{A relation between the canonical connection, the $B$-connection, and the $KT$-connection on a quasi-K\"ahler manifold with Norden metric}

Let $(M,J,g)$ be a quasi-K\"ahler manifold with Norden metric. Let
us consider the following connections on $(M,J,g)$: the canonical
connection $\n^C$, the $B$-connection $\n^B$ (\cite{Mek-1}) and
the connection $\n^{KT}$ with a totally skew-symmetric torsion
(\cite{Mek-2}).

Let
\[
\n^B=\n+Q^B,\quad\n^{KT}=\n+Q^{KT},\quad\n^C=\n+Q^C.
\]
Then, according to \cite{Mek-1,Mek-2} and \eqref{4.11}, we have
\[
\begin{split}
    &Q^B(x,y,z)=\frac{1}{2}F(x,Jy,z),\\[4pt]
    &Q^{KT}(x,y,z)=-\frac{1}{4}\mathop{\s} \limits_{x,y,z} F(x,y,Jz),\\[4pt]
    &Q^{C}(x,y,z)=\frac{1}{4}\bigl\{F(Jz,x,y)-F(x,y,Jz)-F(Jy,x,z)\bigr\}.\\[4pt]
\end{split}
\]
After that, using \eqref{2.3} and \eqref{2.4}, we obtain
\[
Q^B=\frac{1}{2}\left(Q^{KT}+ Q^{C}\right).
\]

This leads to
\begin{prop}
The $B$-connection on a quasi-K\"ahler manifold with Norden metric
is the mean connection for the canonical connection and the
$KT$-connection.
\end{prop}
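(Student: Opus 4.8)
The plan is to reduce the proposition to the single tensor identity $Q^B=\frac{1}{2}\bigl(Q^{KT}+Q^{C}\bigr)$ displayed just above the statement. All three connections are realized as deformations of the \emph{same} Levi-Civita connection, $\n^B=\n+Q^B$, $\n^{KT}=\n+Q^{KT}$, $\n^{C}=\n+Q^{C}$; hence for arbitrary $x,y$ one has $\frac{1}{2}\bigl(\n^{KT}_xy+\n^{C}_xy\bigr)=\n_xy+\frac{1}{2}\bigl(Q^{KT}(x,y)+Q^{C}(x,y)\bigr)$, and this coincides with $\n^B_xy$ exactly when $Q^B=\frac{1}{2}(Q^{KT}+Q^{C})$. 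So the assertion that ``$\n^B$ is the mean connection of $\n^{C}$ and $\n^{KT}$'' is literally the content of that identity, and the proof consists in verifying it.

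First I would record the three deformation tensors in terms of $F$: the formula for $Q^{C}$ comes from \eqref{4.11} (equivalently from \eqref{4.8}), while those for $Q^{B}$ and $Q^{KT}$ are read off from \cite{Mek-1} and \cite{Mek-2}, namely $Q^B(x,y,z)=\frac12F(x,Jy,z)$, $Q^{KT}(x,y,z)=-\frac14\mathop{\s}\limits_{x,y,z}F(x,y,Jz)$ and $Q^{C}(x,y,z)=\frac14\bigl\{F(Jz,x,y)-F(x,y,Jz)-F(Jy,x,z)\bigr\}$. Then I would add $Q^{KT}$ and $Q^{C}$, writing the cyclic sum out as $F(x,y,Jz)+F(y,z,Jx)+F(z,x,Jy)$, and collapse the resulting six-term expression to $2Q^B(x,y,z)=F(x,Jy,z)$. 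Two ingredients drive this: the pointwise symmetries \eqref{2.3}, used to swap the last two slots of $F$ and to move $J$ between them at the cost of a sign, and the quasi-K\"ahler condition in its cyclic forms \eqref{2.4} and \eqref{2.8}, applied with the arguments $x$, $y$, $z$ replaced in turn by their $J$-images. Concretely, \eqref{2.3} turns $F(y,z,Jx)$ and $F(z,x,Jy)$ into terms of the shape $F(\cdot,\cdot,J\cdot)$, the $J$-substituted versions of \eqref{2.4} re-express the first-slot terms produced by $Q^{C}$, and a final comparison of \eqref{2.4} (with $z\mapsto Jz$) against \eqref{2.8} forces the surviving coefficients to collapse, leaving exactly $F(x,Jy,z)$.

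The one point that needs care is this last bookkeeping step. The terms $F(Jz,x,y)$ and $F(Jy,x,z)$ coming from $Q^{C}$ carry $J$ in the \emph{first} slot, where \eqref{2.3} gives no leverage; they can only be absorbed through the cyclic identities \eqref{2.4}/\eqref{2.8} at precisely the right substitutions, and keeping the signs consistent while doing so is the main — indeed essentially the only — obstacle. Once $Q^B=\frac12(Q^{KT}+Q^{C})$ is in hand, the proposition follows immediately by the remark in the first paragraph.
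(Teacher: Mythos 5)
Your proposal is correct and follows the paper's own argument: the paper likewise writes down $Q^B$, $Q^{KT}$, $Q^{C}$ in terms of $F$ and collapses $Q^{KT}+Q^{C}$ to $2Q^B$ using the symmetries \eqref{2.3} together with the quasi-K\"ahler cyclic condition \eqref{2.4} (of which \eqref{2.8} is an equivalent form). The identity does check out as you outline, so no gap.
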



\bigskip
\noindent \textsc{Dimitar Mekerov}\\
University of Plovdiv, Faculty of Mathematics and
Informatics\\
236 Bulgaria Blvd., 4003 Plovdiv, Bulgaria\\
mircho@uni-plovdiv.bg\\


\begin{thebibliography}{99}

\bibitem{Bis} %
J.-M.~Bismut, A local index theorem for non-K\"ahler manifolds.
Math. Ann. 284 (1989), 681--699.

\bibitem{GaBo} 
G.~Ganchev and A.~Borisov, Note on the Almost Complex Manifolds
with a Norden Metric. C. R. Acad. Bulgare Sci. 39 (1986), no.~5,
31--34.

\bibitem{GaGrMi}
G.~Ganchev, K.~Gribachev and V.~Mihova, $B$-Connections and their
Conformal Invariants on Conformally Kahler Manifolds with
$B$-Metric. Publ. Inst. Math. (Beograd) (N.S.) 42 (1987),
107--121.

\bibitem{GaMi}
G.~Ganchev and V.~Mihova, Canonical connection and the canonical
conformal group on an almost complex manifold with $B$-Metric.
Ann. Univ. Sofia Fac. Math. Inform. 81 (1987), 195--206.

\bibitem{GRMa}
E.~Garc\'ia-R\'io and Y.~Matsushita, Isotropic K\"ahler Structures
on Engel 4-Manifolds. J. Geom. Phys. 33 (2000), 288--294.

\bibitem{Gaud}
P.~Gauduchon, Hermitian connections and Dirac operators, Boll.
Unione Mat. Ital. Sez. A Mat. Soc. Cult. (8). 11 (1997), 257--288.

\bibitem{GrayBNV}
A.~Gray, M.~Barros, A.~Naveira and L.~Vanheke. The Chern numbers
of holomorphic vector bundles and formally holomorphic connections
of complex vector bundles over almost complex manifolds. J. reine
andew. Math. 314 (1980), 84--98.

\bibitem{GrMeDj} 
K.~I.~Gribachev, D.~G.~Mekerov and G.~D.~Djelepov, Generalized
$B$-Manifolds. C. R. Acad. Bulgare Sci. 38 (1985), no.~3,
299--302.

\bibitem{Hay} %
H.~Hayden, Subspaces of a space with torsion. Proc. London Math.
Soc. 34 (1934), 27--50.


\bibitem{KoNo} %
S.~Kobayashi and K.~Nomizu, Foundations of differential geometry,
vol.~1, Intersci. Publ., New York (1963).

\bibitem{Lih-1}
A.~Lihnerowicz, G\'{e}n\'{e}ralization de la g\'{e}om\'{e}trie
k\"ahlerienne globale. Coll. de G\'{e}om. diff. Louvain 16 (1955),
no.~2, 99--122.

\bibitem{Lih-2}
A.~Lihnerowicz, Un th\'{e}or\`{e}me sur les espaces homog\`{e}nes
complexes. Arch. Math. 5 (1954), 207--215.

\bibitem{Lih-3}
A.~Lihnerowicz, Th\'{e}orie globale des connections et des groupes
d'homotopie. Edizioni Cremonese, Roma, 1962.

\bibitem{Mek-1}
D.~Mekerov, On the geometry of the $B$-connection on
quasi-K\"{a}hler manifolds with Norden metric. C. R. Acad. Bulgare
Sci. 61 (2008), 1105--1110.

\bibitem{Mek-2}
D.~Mekerov, A connection with skew symmetric torsion and
K\"{a}hler curvature tensor on quasi-K\"{a}hler manifolds with
Norden metric. C. R. Acad. Bulgare Sci. 61 (2008), 1249--1256.


\bibitem{MeMa}
D.~Mekerov and M.~Manev, On the geometry of quasi-K\"{a}hler
manifolds with Norden metric. Nihonkai Math. J. 16 (2005), no.~2,
89--93.

\bibitem{Stro}
A.~Strominger, Superstrings with torsion. Nucl. Phys. B, 274
(1986), 253--284.

\bibitem{Ya}
K.~Yano, Differential geometry of complex and almost complex
spaces. Pergamon press, 1965.


\end{thebibliography}
\end{document}